\documentclass{article}
\usepackage{graphicx} 
\usepackage{amsmath, amssymb, amsthm}
\usepackage{enumitem}
\usepackage{booktabs}
\usepackage{longtable}
\usepackage{hyperref}
\usepackage{tikz}
\usepackage{tikz-cd}
\usepackage{float}
\usepackage{textcomp}
\newcommand{\cent}{\textcentoldstyle}

\hypersetup{colorlinks=true, linkcolor=blue!60!black,
            citecolor=blue!60!black}

\newtheorem{theorem}{Theorem}[section]
\newtheorem{proposition}[theorem]{Proposition}

\newtheorem{lemma}[theorem]{Lemma}
\theoremstyle{definition}
\newtheorem{definition}[theorem]{Definition}
\newtheorem{remark}[theorem]{Remark}

\newcommand{\ic}{\mathrm{ic}}

\title{A Formal Graph-Theoretic Framework for Pitch Class Set Analysis}
\author{Aleksa Joksimović}
\date{May 2026}

\begin{document}

\maketitle

\begin{abstract}
    We present a graph-theoretic reformulation of pitch-class set theory in which each set in $\mathbb{Z}_n$ is represented as a complete weighted graph whose edge weights are interval classes. We show that this construction is invariant under the dihedral group $D_n$, and that the full interval structure is encoded by a cyclic step composition, from which all interval data are recovered via an additivity principle. This framework yields a direct correspondence between T/I equivalence and graph isomorphism, and reinterprets Z-relation as non-isomorphic graphs with identical edge-weight multisets. We extend the model to weighted clique complexes, linking higher-order homometry to simplex-weight structure, and introduce a cent-weighted formulation enabling comparisons across different equal temperaments. Finally, we define a polynomial invariant derived from antipodal step pairings for algebraic analysis of pitch class space.

\end{abstract}
\section{Introduction}
This paper proposes a combinatorial, graph-theoretic reformulation of pitch class set theory introduced by Allen Forte \cite{forte1973}. Although methods of graph and network theory have been applied to musical analysis before \cite{alcala2018}, we specifically use them to model the combinatorial properties of pitch class sets under the dihedral group $D_n$ across arbitrary equal temperaments. The paper assumes a certain level of familiarity with the fundamental concepts of pitch class set theory and is structured as a catalogue of results, rather than an in-depth extensive narrative.

\vspace{1em}
\begin{definition}[\textit{Pitch class graph}]
\label{def:musical-graph}
Let $P \subseteq \mathbb{Z}_n$ be a pitch class set. The
\textbf{pitch class graph} of $P$ is the complete weighted graph
$G(P) = (V, E, w)$ where $V = P$, $E$ is the set of all unordered
pairs of distinct elements of $P$, and $w(\{p, q\}) = \ic(p, q)$,
where
\[
    \ic(p, q) = \min(|p - q|,\ n - |p - q|)
    \;\in\; \{1, \ldots, \lfloor n/2 \rfloor\}.
\]
\end{definition}

The dihedral group $D_n$ acts on $\mathbb{Z}_n$ by transpositions
$T_s(p) = p + s$ and inversions $I_s(p) = s - p$ (mod $n$). Since
$\ic(g(p), g(q)) = \ic(p, q)$ for all $g \in D_n$, T/I-equivalent
sets produce isomorphic pitch class graphs.

\begin{lemma}
\label{lem:dihedral}
$\ic(g(p), g(q)) = \ic(p, q)$ for all $g \in D_n$.
\end{lemma}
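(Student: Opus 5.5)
The plan is to first rewrite $\ic$ in a form that depends only on the residue class of the difference $p-q$ modulo $n$, and then check that every element of $D_n$ sends that difference either to itself or to its negative. Take representatives $p,q\in\{0,\dots,n-1\}$, so that $|p-q|\in\{0,\dots,n-1\}$. For a residue $d\in\mathbb{Z}_n$ with least nonnegative representative $r$, I will define
\[
\delta(d)=\min(r,\,n-r).
\]
The first step is to show that $\ic(p,q)=\delta(p-q)$. If $p\ge q$ then $r=p-q=|p-q|$, and the two expressions agree directly. If $p<q$ then $r=n-|p-q|$, so $\delta(p-q)=\min(n-|p-q|,\,|p-q|)$, which is again $\ic(p,q)$.

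The second step is the symmetry $\delta(-d)=\delta(d)$. When $r=0$, both $d$ and $-d$ are the zero residue, so there is nothing to check. When $r\neq 0$, the residue $-d$ has least nonnegative representative $n-r$, and $\min(n-r,\,n-(n-r))=\min(n-r,\,r)$, so the minimum is unchanged.

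The third step handles the group elements one type at a time. Every element of $D_n$ is either a transposition $T_s$ or an inversion $I_s$, as the paper describes the action. For a transposition,
\[
T_s(p)-T_s(q)\equiv (p+s)-(q+s)=p-q \pmod n,
\]
so $\ic(T_s p,T_s q)=\delta(p-q)=\ic(p,q)$. For an inversion,
\[
I_s(p)-I_s(q)\equiv (s-p)-(s-q)=-(p-q) \pmod n,
\]
and the symmetry from the second step gives $\ic(I_sp,I_sq)=\delta(-(p-q))=\delta(p-q)=\ic(p,q)$. Because every $g\in D_n$ is of one of these two forms, this proves the lemma.

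I do not expect a real obstacle here. The only point needing care is the first step: the definition is written with absolute values of integer representatives, while the group acts on residues mod $n$, so the reduction to $\delta$ is what makes the argument well defined. Getting that identification right, including the boundary case $r=0$ (where $p=q$ and the edge does not exist), is the one place where attention is needed.
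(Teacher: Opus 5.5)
Your proof is correct and follows the same route as the paper's proof: treat transpositions and inversions separately, and observe that the difference $p-q$ is either preserved or negated. Your reduction of $\ic$ to a function of the residue $p-q \bmod n$ that is invariant under negation is more careful than the paper's argument, which writes $|T_s(p)-T_s(q)|=|p-q|$ with integer absolute values and so ignores wraparound. For example, with $n=12$, $p=0$, $q=5$, $s=8$ the images are $8$ and $1$, and $|8-1|=7\neq 5$; this discrepancy disappears only after applying $\min(x,n-x)$, which is exactly what your first two steps justify.
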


\begin{proof}
For transposition: $|T_s(p) - T_s(q)| = |(p+s)-(q+s)| = |p-q|$.
For inversion: $|I_s(p) - I_s(q)| = |(s-p)-(s-q)| = |p-q|$.
In both cases $\ic$ is preserved.
\end{proof}

Valid pitch class graphs must satisfy the following two properties:

\begin{definition}[Closure]
\label{def:closure}
Let $P \subseteq \mathbb{Z}_n$ have cardinality $k$, written in
ascending order with $p_0 = 0$. The \textbf{steps} of $P$ are
\[
    s_i = p_i - p_{i-1} \quad (i = 1, \ldots, k-1),
    \qquad s_k = n - p_{k-1}.
\]
The steps form a \textbf{composition} of $n$ --- an ordered
partition of $n$ into $k$ positive parts --- satisfying the
\textbf{closure property}:
\[
    s_1 + s_2 + \cdots + s_k = n.
\]
\end{definition}
This is a direct concequence of $\mathbb{Z}_n$ arithmetic.
\begin{definition}[Additivity]
\label{def:additivity}
Once the steps of $P$ are fixed, every interval class is determined.
The interval class between adjacent pitch classes is
$\ic(p_{i-1}, p_i) = \min(s_i, n - s_i)$.
The interval class between any two pitch classes $p_i$ and $p_j$
with $j > i$ is:
\[
    \ic(p_i, p_j) = \min\!\left(\sum_{\ell=i+1}^{j} s_\ell,\;\;
    n - \sum_{\ell=i+1}^{j} s_\ell\right).
\]
Non-adjacent interval classes are called \textbf{diagonals} and are
each determined by a single addition and a single $\min$. This is
the \textbf{additivity rule}: the interval class between any two
pitch classes equals the interval class of the sum of the steps
between them.
\end{definition}

The collection of all $\binom{k}{2}$ interval classes of $P$ is the
\textbf{interval multiset} $\mu(P)$, equivalently the interval
vector written as a multiset. Since every interval class is
determined by the composition $(s_1, \ldots, s_k)$, the entire
interval structure of $P$ is encoded in its composition of $n$.

Figure~\ref{fig:trichord-realizations} shows all six pitch class graphs
realizing the interval multiset $\{3,4,5\}$ in $\mathbb{Z}_{12}$.
These form the complete T/I orbit of $\{0,3,7\}$: transposition and
inversion act as vertex relabelings preserving every edge weight, so
all six graphs are isomorphic and the T/I class is represented by
the single unlabeled triangle in Figure~\ref{fig:canonical-triangle}.

\begin{figure}[H]
\centering
\begin{tikzpicture}[scale=0.8]
\begin{scope}[xshift=0cm]
  \node (a0) at (90:1.2)  [circle,fill=black,inner sep=1.5pt]{};
  \node at (90:1.65)  {\footnotesize $(0)$};
  \node (a1) at (210:1.2) [circle,fill=black,inner sep=1.5pt]{};
  \node at (210:1.65) {\footnotesize $(3)$};
  \node (a2) at (330:1.2) [circle,fill=black,inner sep=1.5pt]{};
  \node at (330:1.65) {\footnotesize $(7)$};
  \draw (a0)--(a1) node[midway,left]  {\scriptsize $3$};
  \draw (a1)--(a2) node[midway,below] {\scriptsize $4$};
  \draw (a2)--(a0) node[midway,right] {\scriptsize $5$};
  \node at (0,-2.0) {\small $(0,3,7)$};
\end{scope}
\begin{scope}[xshift=4.2cm]
  \node (b0) at (90:1.2)  [circle,fill=black,inner sep=1.5pt]{};
  \node at (90:1.65)  {\footnotesize $(0)$};
  \node (b1) at (210:1.2) [circle,fill=black,inner sep=1.5pt]{};
  \node at (210:1.65) {\footnotesize $(4)$};
  \node (b2) at (330:1.2) [circle,fill=black,inner sep=1.5pt]{};
  \node at (330:1.65) {\footnotesize $(7)$};
  \draw (b0)--(b1) node[midway,left]  {\scriptsize $4$};
  \draw (b1)--(b2) node[midway,below] {\scriptsize $3$};
  \draw (b2)--(b0) node[midway,right] {\scriptsize $5$};
  \node at (0,-2.0) {\small $(0,4,7)$};
\end{scope}
\begin{scope}[xshift=8.4cm]
  \node (c0) at (90:1.2)  [circle,fill=black,inner sep=1.5pt]{};
  \node at (90:1.65)  {\footnotesize $(0)$};
  \node (c1) at (210:1.2) [circle,fill=black,inner sep=1.5pt]{};
  \node at (210:1.65) {\footnotesize $(5)$};
  \node (c2) at (330:1.2) [circle,fill=black,inner sep=1.5pt]{};
  \node at (330:1.65) {\footnotesize $(8)$};
  \draw (c0)--(c1) node[midway,left]  {\scriptsize $5$};
  \draw (c1)--(c2) node[midway,below] {\scriptsize $3$};
  \draw (c2)--(c0) node[midway,right] {\scriptsize $4$};
  \node at (0,-2.0) {\small $(0,5,8)$};
\end{scope}
\begin{scope}[xshift=0cm,yshift=-4.8cm]
  \node (d0) at (90:1.2)  [circle,fill=black,inner sep=1.5pt]{};
  \node at (90:1.65)  {\footnotesize $(0)$};
  \node (d1) at (210:1.2) [circle,fill=black,inner sep=1.5pt]{};
  \node at (210:1.65) {\footnotesize $(4)$};
  \node (d2) at (330:1.2) [circle,fill=black,inner sep=1.5pt]{};
  \node at (330:1.65) {\footnotesize $(9)$};
  \draw (d0)--(d1) node[midway,left]  {\scriptsize $4$};
  \draw (d1)--(d2) node[midway,below] {\scriptsize $5$};
  \draw (d2)--(d0) node[midway,right] {\scriptsize $3$};
  \node at (0,-2.0) {\small $(0,4,9)$};
\end{scope}
\begin{scope}[xshift=4.2cm,yshift=-4.8cm]
  \node (e0) at (90:1.2)  [circle,fill=black,inner sep=1.5pt]{};
  \node at (90:1.65)  {\footnotesize $(0)$};
  \node (e1) at (210:1.2) [circle,fill=black,inner sep=1.5pt]{};
  \node at (210:1.65) {\footnotesize $(3)$};
  \node (e2) at (330:1.2) [circle,fill=black,inner sep=1.5pt]{};
  \node at (330:1.65) {\footnotesize $(8)$};
  \draw (e0)--(e1) node[midway,left]  {\scriptsize $3$};
  \draw (e1)--(e2) node[midway,below] {\scriptsize $5$};
  \draw (e2)--(e0) node[midway,right] {\scriptsize $4$};
  \node at (0,-2.0) {\small $(0,3,8)$};
\end{scope}
\begin{scope}[xshift=8.4cm,yshift=-4.8cm]
  \node (f0) at (90:1.2)  [circle,fill=black,inner sep=1.5pt]{};
  \node at (90:1.65)  {\footnotesize $(0)$};
  \node (f1) at (210:1.2) [circle,fill=black,inner sep=1.5pt]{};
  \node at (210:1.65) {\footnotesize $(5)$};
  \node (f2) at (330:1.2) [circle,fill=black,inner sep=1.5pt]{};
  \node at (330:1.65) {\footnotesize $(9)$};
  \draw (f0)--(f1) node[midway,left]  {\scriptsize $5$};
  \draw (f1)--(f2) node[midway,below] {\scriptsize $4$};
  \draw (f2)--(f0) node[midway,right] {\scriptsize $3$};
  \node at (0,-2.0) {\small $(0,5,9)$};
\end{scope}
\end{tikzpicture}
\caption{All six realizations of the interval multiset $\{3,4,5\}$
in $\mathbb{Z}_{12}$, forming the complete T/I orbit of $\{0,3,7\}$.}
\label{fig:trichord-realizations}
\end{figure}

\begin{figure}[H]
\centering
\begin{tikzpicture}[scale=0.9]
  \node (v0) at (90:1.6)  [circle,fill=black,inner sep=1.8pt]{};
  \node (v1) at (210:1.6) [circle,fill=black,inner sep=1.8pt]{};
  \node (v2) at (330:1.6) [circle,fill=black,inner sep=1.8pt]{};
  \draw (v0)--(v1) node[midway,left]  {\scriptsize $3$};
  \draw (v1)--(v2) node[midway,below] {\scriptsize $4$};
  \draw (v2)--(v0) node[midway,right] {\scriptsize $5$};
\end{tikzpicture}
\caption{Canonical weighted triangle representing the T/I class of
$\{0,3,7\}$ with interval multiset $\{3,4,5\}$.}
\label{fig:canonical-triangle}
\end{figure}

\begin{theorem}
\label{thm:cycle-determines-graph}
The pitch class graph $G(P)$ is uniquely and completely determined
by its cycle subgraph $C(P)$.
\end{theorem}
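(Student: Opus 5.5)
The plan is to read $C(P)$ as the Hamiltonian cycle $p_0 p_1 \cdots p_{k-1} p_0$ of $G(P)$ through the pitch classes in ascending (cyclic) order, with its inherited edge weights $w_i = \ic(p_{i-1}, p_i) = \min(s_i, n - s_i)$, taking $p_k = p_0$. The argument has two stages. First, recover the composition $(s_1,\ldots,s_k)$ from the weights $(w_1,\ldots,w_k)$, using the closure property of Definition~\ref{def:closure}. Second, invoke the additivity rule of Definition~\ref{def:additivity}: once the steps are known, every diagonal weight $\ic(p_i,p_j)=\min\bigl(\sum_{\ell=i+1}^{j}s_\ell,\; n-\sum_{\ell=i+1}^{j}s_\ell\bigr)$ is determined. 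Hence every edge of the complete graph $G(P)$ carries a determined weight, and $G(P)$ is determined by $C(P)$. The small cases can be dispatched first: for $k\le 3$ the cycle already contains every edge of $G(P)$, so there is nothing to prove.

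The main obstacle is the first stage, since $\min(s_i,n-s_i)$ forgets whether a step exceeds $n/2$. Each $s_i$ is either $w_i$ or $n-w_i$. By closure at most one step can satisfy $s_i>n/2$: if two did, their sum would already exceed $n$. There are then two cases.

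In the first case no step exceeds $n/2$. Then $s_i=w_i$ for all $i$, and $\sum_i w_i=n$. (A step equal to $n/2$ causes no ambiguity, since then $w_i=n-w_i$.)

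In the second case exactly one step $s_j$ exceeds $n/2$. Then $s_j=n-w_j$ with $w_j<n/2$, and closure gives $n-w_j+\sum_{i\ne j}w_i=n$, that is, $w_j=\sum_{i\ne j}w_i$. In particular $\sum_i w_i=2w_j<n$.

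The total weight $W=\sum_i w_i$ therefore decides which case holds: $W=n$ in the first case and $W<n$ in the second. In the second case the index $j$ is forced by the equation $w_j=W/2$. It remains to check that at most one index can satisfy this equation. If $w_j=w_{j'}=W/2$ with $j\neq j'$, then $w_j+w_{j'}=W$, so all remaining $w_i$ vanish. Since weights are positive, this forces $k=2$, a case already handled.

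Thus the step composition is a well-defined function of $C(P)$, and the additivity rule finishes the proof. I would close by noting that the recovery is canonical: it uses only the cyclic order and the weights of $C(P)$, not the actual labels in $\mathbb{Z}_n$. Consequently isomorphic weighted cycles yield isomorphic complete graphs, which is the form of the statement needed for the later correspondence with T/I equivalence.
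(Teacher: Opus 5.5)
Your proof is correct. Its second stage is the whole of the paper's argument: the paper says that $C(P)$ ``encodes the step sequence'' and applies the additivity rule directly to the cycle weights.

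What you add is the first stage. The paper silently identifies the cycle weights with the steps. That is justified only if $C(P)$ is weighted by steps rather than by the interval classes it inherits as a subgraph of $G(P)$, or if no step exceeds $n/2$. Under the subgraph reading the paper's formula can fail. For $P=\{0,7,8,9\}\subseteq\mathbb{Z}_{12}$ the cycle weights are $(5,1,1,3)$. Summing along the arc gives $\min(5+1,\,12-6)=6$ for the diagonal $\{0,8\}$, whereas $\ic(0,8)=4$.

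Your closure argument is exactly the missing recovery of $(s_1,\ldots,s_k)$ from $C(P)$: at most one step exceeds $n/2$; $W=n$ or $W=2w_j<n$ according to whether such a step exists; and for $k\ge 3$ the index $j$ with $w_j=W/2$ is unique. In this example it yields $W=10$, $j=1$, $s_1=7$, and hence the correct value $\min(8,4)=4$. So the paper's version is shorter but rests on an unstated identification, while yours proves the statement under its natural reading.

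Your closing remark is also worth keeping: the recovery uses only the weighted cyclic order, so it gives the statement up to isomorphism. That is what the paper's later claim, that Z-related sets have non-isomorphic cycle graphs, actually requires.
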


\begin{proof}
The cycle $C(P)$ encodes the step sequence $(w_0, w_1, \ldots,
w_{k-1})$ of $P$. By the additivity rule
(Definition~\ref{def:additivity}), every edge weight of $G(P)$ is
\[
    w(\{p_i, p_j\}) = \ic(p_i, p_j) =
    \min\!\left(\sum_{\ell=i}^{j-1} w_\ell,\;\;
    n - \sum_{\ell=i}^{j-1} w_\ell\right),
\]
a function of consecutive cycle weights alone. Since $G(P)$ is
complete, its edge set is exactly the set of all such pairs, and
every edge weight is determined by the above formula. Therefore
$C(P)$ determines $G(P)$ uniquely.
\end{proof}

\begin{definition}[Cent-weighting]
The
\textbf{cent-weighted pitch class graph} of $P$ is the complete weighted graph
$G(P) = (V, E, w')$ where $V = P \subseteq \mathbb{Z}_n$, $E$ is the set of all unordered
pairs of distinct elements of $P$ and
\[
    w'\{ p,q\} = \delta(n) \cdot\ic(p, q) = \frac{1200}{n}\min(|p - q|,\ n - |p - q|) \in (0,600]
    .
\]
\end{definition}

In $n$-TET, each step has size $\frac{1200}{n}$ cents - the \textbf{diesis} $\delta(n)$ of the temperament (the term diesis being loosely borrowed from music theory). This real interval is the logarithmic pitch class space that represents a common ambient space across all equal temperaments.

\begin{figure}[H]
\centering
\begin{tikzpicture}[scale=1.0]

\begin{scope}[xshift=0cm]
  \node (a0) at (90:1.8)   [circle,fill=black,inner sep=2pt]{};
  \node at (90:2.25)        {\footnotesize $0$};
  \node (a1) at (0:1.8)    [circle,fill=black,inner sep=2pt]{};
  \node at (0:2.35)         {\footnotesize $4$};
  \node (a2) at (270:1.8)  [circle,fill=black,inner sep=2pt]{};
  \node at (270:2.25)       {\footnotesize $7$};
  \node (a3) at (180:1.8)  [circle,fill=black,inner sep=2pt]{};
  \node at (180:2.35)       {\footnotesize $11$};

  \draw (a0)--(a1) node[midway,above right] {\scriptsize $400$};
  \draw (a0)--(a2) node[midway,right=6pt]   {\scriptsize $500$};
  \draw (a0)--(a3) node[midway,above left]  {\scriptsize $100$};
  \draw (a1)--(a2) node[midway,below right] {\scriptsize $300$};
  \draw (a1)--(a3) node[midway,above=6pt]   {\scriptsize $500$};
  \draw (a2)--(a3) node[midway,below left]  {\scriptsize $400$};

  \node at (0,-2.9) {$G(\{0,4,7,11\})$ in $\mathbb{Z}_{12}$};
  \node at (0,-3.4) {\small $\delta(12) = 100$ cents};
\end{scope}

\begin{scope}[xshift=7.5cm]
  \node (b0) at (90:1.8)   [circle,fill=black,inner sep=2pt]{};
  \node at (90:2.25)        {\footnotesize $0$};
  \node (b1) at (0:1.8)    [circle,fill=black,inner sep=2pt]{};
  \node at (0:2.35)         {\footnotesize $10$};
  \node (b2) at (270:1.8)  [circle,fill=black,inner sep=2pt]{};
  \node at (270:2.25)       {\footnotesize $18$};
  \node (b3) at (180:1.8)  [circle,fill=black,inner sep=2pt]{};
  \node at (180:2.35)       {\footnotesize $28$};

  \draw (b0)--(b1) node[midway,above right] {\scriptsize $387$};
  \draw (b0)--(b2) node[midway,right=6pt]   {\scriptsize $503$};
  \draw (b0)--(b3) node[midway,above left]  {\scriptsize $116$};
  \draw (b1)--(b2) node[midway,below right] {\scriptsize $310$};
  \draw (b1)--(b3) node[midway,above=6pt]   {\scriptsize $503$};
  \draw (b2)--(b3) node[midway,below left]  {\scriptsize $387$};

  \node at (0,-2.9) {$G(\{0,10,18,28\})$ in $\mathbb{Z}_{31}$};
  \node at (0,-3.4) {\small $\delta(31) \approx 38.7$ cents};
\end{scope}

\end{tikzpicture}
\caption{Cent-weighted pitch class graphs of the dominant seventh chord
$\{0,4,7,11\}$ in $\mathbb{Z}_{12}$ (left) and its closest
approximation $\{0,10,18,28\}$ in $\mathbb{Z}_{31}$ (right).
Edge weights are in cents, rounded to the nearest integer.
The two graphs are non-isomorphic as weighted graphs: the minor
second $100\,\cent$ in $\mathbb{Z}_{12}$ corresponds to
$116\,\cent$ in $\mathbb{Z}_{31}$, reflecting the distinct
dieses $\delta(12) = 100$ and $\delta(31) \approx 38.7$.}

\label{fig:dominant-seventh-cent}
\end{figure}

\section{Homometry and non-isomorphic graphs}

Pitch class sets that have the same interval class multiset (equivalently the same interval vector) but cannot be inverted or transposed into one another (meaning they form distinct T/I orbits) are called \textbf{homometric} or \textit{Z-related} sets. In our graph theoretic language, this translates to two non-isomorphic graphs with the same interval class multiset.

\begin{lemma}
\label{lem:iso-iff-ti}
$G(P_1) \cong G(P_2) \Leftrightarrow P_1 \sim_{T/I} P_2$.
\end{lemma}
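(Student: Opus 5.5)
The direction ($\Leftarrow$) is immediate from Lemma~\ref{lem:dihedral}. If $P_2 = g(P_1)$ for some $g \in D_n$, then $g$ restricted to $P_1$ is a bijection $P_1 \to P_2$. It preserves every edge weight, so it is a weighted-graph isomorphism $G(P_1) \cong G(P_2)$. All the work is in ($\Rightarrow$). There we are given a bijection $\varphi : P_1 \to P_2$ with $\ic(\varphi(p),\varphi(q)) = \ic(p,q)$ for all $p,q$. We must produce $g \in D_n$ with $g|_{P_1} = \varphi$, or at least $g(P_1) = P_2$. The plan is to show that $\ic$-preserving bijections between subsets of $\mathbb{Z}_n$ are rigid: they are restrictions of dihedral maps. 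Throughout, $\ic(p,q)$ is the cyclic distance $d(p-q)$, where $d(x) = \min(x \bmod n,\ n - x \bmod n)$.

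\textbf{Step 1 (normalise).} Pick $p_0 \in P_1$. Replace $P_1$ by $T_{-p_0}(P_1)$ and $P_2$ by $T_{-\varphi(p_0)}(P_2)$, adjusting $\varphi$ by these translations. By Lemma~\ref{lem:dihedral} this changes neither the hypothesis nor the conclusion, so we may assume $0 \in P_1$ and $\varphi(0) = 0$. Then for every $p \in P_1$ we have $d(\varphi(p)) = d(p)$, hence $\varphi(p) = \varepsilon_p\, p$ with a sign $\varepsilon_p \in \{\pm 1\}$. The sign is irrelevant when $2p \equiv 0 \pmod n$, that is, when $p = 0$ or $p = n/2$.

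\textbf{Step 2 (fix one sign).} If every $p \in P_1$ satisfies $2p \equiv 0$, then $\varphi$ is the identity and we are done. Otherwise choose $a \in P_1$ with $2a \not\equiv 0$. If $\varphi(a) = -a$, compose $\varphi$ with the inversion $I_0(x) = -x$; this is again legitimate by Lemma~\ref{lem:dihedral}. After this we may assume $\varphi(a) = a$.

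\textbf{Step 3 (propagate; the main point).} Let $b \in P_1$ with $2b \not\equiv 0$. Comparing the edge $\{a,b\}$ with its image gives $d(a - \varepsilon_b b) = d(a - b)$. Suppose $\varepsilon_b = -1$. Then $d(a+b) = d(a-b)$, so $a + b \equiv \pm(a - b) \pmod n$. This forces $2b \equiv 0$ or $2a \equiv 0$, and both are excluded. Hence $\varepsilon_b = +1$ for every $b$. So $\varphi$ is the identity on the normalised set. Undoing the normalisations, $\varphi$ is the restriction of a composite of transpositions and possibly one inversion, which is an element $g \in D_n$. In particular $P_2 = g(P_1)$, i.e.\ $P_1 \sim_{T/I} P_2$.

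I expect Step 3 to be the only real obstacle. The delicate case is the tritone-type points $2p \equiv 0$ (only present when $n$ is even), where $\ic$ cannot distinguish $p$ from $-p$. The case split in Steps 2--3 is designed to handle exactly this: such points never need a sign, and every other point has its sign pinned down by its edge to the chosen anchor $a$. If one prefers to stay inside the paper's framework, one could instead phrase Steps 1--3 via the cycle subgraph of Theorem~\ref{thm:cycle-determines-graph}. However, a graph isomorphism need not map the cycle $C(P_1)$ to $C(P_2)$ a priori, so the direct metric-rigidity argument above is the safer route.
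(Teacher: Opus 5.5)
Your proof is correct, and for ($\Rightarrow$) it takes a genuinely different route from the paper. The paper keeps only the weight-$1$ edges. It observes that $\varphi$ preserves semitone adjacency, asserts that $\varphi$ therefore extends to an automorphism of $C_n$, and concludes via $\mathrm{Aut}(C_n)=D_n$. That argument is shorter, but its extension step is not justified as stated. Adjacency among points of $P_1$ constrains only pairs a semitone apart, so it is vacuous for, e.g., $\{0,3,7\}\subseteq\mathbb{Z}_{12}$. There the bijection $0\leftrightarrow 3$, $7\mapsto 7$ preserves the (empty) semitone adjacency. Yet it is not the restriction of any element of $D_n$: the only dihedral map swapping $0$ and $3$ is $I_3$, which sends $7$ to $8$. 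The workable form of the paper's idea is that $\ic$ is exactly the path metric of $C_n$, so $\varphi$ is a partial isometry of $C_n$. The real content of the lemma is then that partial isometries of $C_n$ extend to automorphisms. Your Steps 1--3 prove exactly that. You use every interval class through the edges to the anchor $a$, and you isolate the only delicate points, those with $2p\equiv 0$, where $\ic$ cannot detect a sign. You also obtain the stronger conclusion the paper claims, $\varphi=g|_{P_1}$ itself rather than merely $g(P_1)=P_2$. Your closing caution against routing the argument through $C(P)$ is well placed for the same reason.
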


\begin{proof}
By Lemma~\ref{lem:dihedral}, every $g \in D_n$ preserves $\ic$, so
T/I-equivalent sets produce isomorphic pitch class graphs.
Conversely, suppose $\varphi : P_1 \to P_2$ is a weighted graph
isomorphism. Then $\ic(\varphi(p), \varphi(q)) = \ic(p,q)$ for all
$p,q \in P_1$, so in particular $\ic(\varphi(p), \varphi(q)) = 1$
iff $\ic(p,q) = 1$. Thus $\varphi$ preserves adjacency in the cycle
$C_n$, so it extends to an automorphism of $C_n$. Since
$\mathrm{Aut}(C_n) = D_n$, we have $\varphi = g|_{P_1}$ for some
$g \in D_n$, giving $P_1 \sim_{T/I} P_2$.
\end{proof}

\begin{proposition}
\label{prop:z-graph}
$P_1$ and $P_2$ are Z-related if and only if
$\mu(P_1) = \mu(P_2) \wedge G(P_1) \ncong G(P_2)$.
\end{proposition}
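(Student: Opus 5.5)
The plan is to unfold the definition of Z-relation and translate each clause into graph language, using Lemma~\ref{lem:iso-iff-ti} for the translation. By the definition given at the start of this section, $P_1$ and $P_2$ are Z-related exactly when two conditions hold. The first is the \emph{homometry} condition, $\mu(P_1)=\mu(P_2)$. This says the interval vectors agree, or equivalently that the multisets of interval classes over all $\binom{k}{2}$ pairs agree. The second is the \emph{orbit} condition, $P_1 \not\sim_{T/I} P_2$. The proposition is then the conjunction of two equivalences. The first clause matches verbatim on both sides. The second clause is handled by taking the contrapositive of Lemma~\ref{lem:iso-iff-ti}, which gives $P_1 \not\sim_{T/I} P_2 \Leftrightarrow G(P_1)\ncong G(P_2)$.

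The steps, in order, are as follows.
\begin{enumerate}[label=(\roman*)]
\item Observe that $\mu(P)$ is, by definition, the multiset of edge weights $\{w(e) : e \in E(G(P))\}$ of the pitch class graph. Thus the homometry clause can be read directly off the graphs and needs no translation.
\item For the forward direction, assume $P_1,P_2$ are Z-related. Then $\mu(P_1)=\mu(P_2)$ holds by definition. If $G(P_1)\cong G(P_2)$ held, Lemma~\ref{lem:iso-iff-ti} would give $P_1\sim_{T/I}P_2$, contradicting the distinct-orbit requirement. Hence $G(P_1)\ncong G(P_2)$.
\item For the converse, assume $\mu(P_1)=\mu(P_2)$ and $G(P_1)\ncong G(P_2)$. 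If $P_1\sim_{T/I}P_2$, then Lemma~\ref{lem:dihedral} (via the easy direction of Lemma~\ref{lem:iso-iff-ti}) would produce an isomorphism, which is a contradiction. So the sets lie in distinct T/I orbits while sharing an interval multiset, and are therefore Z-related.
\end{enumerate}

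The only step with real content is the nontrivial direction of Lemma~\ref{lem:iso-iff-ti}, namely that a weighted isomorphism forces T/I equivalence. It is needed in the forward direction (ii). I will take it as given from the paper.

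One caveat deserves attention there. The lemma's argument uses the edges of weight $1$ to recover $C_n$-adjacency, and this only works cleanly when $P$ contains enough adjacent pitch classes. If I wanted to be careful, I would note that the proposition's logic depends on nothing beyond the stated equivalence in Lemma~\ref{lem:iso-iff-ti}. The present proof is therefore purely formal, and any weakness sits in that lemma rather than here.

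I also need $|P_1|=|P_2|$, so that the multisets are comparable. This is implicit because equal multisets have equal size $\binom{k}{2}$, which determines $k$ for $k\ge 2$. The degenerate case $k\le 1$ is trivially excluded.
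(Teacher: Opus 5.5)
Your proposal is correct and is essentially the paper's proof. Both note that $\mu(P)$ is exactly the edge-weight multiset of $G(P)$, so the homometry clause transfers unchanged, and both convert the ``distinct T/I orbits'' clause into $G(P_1)\ncong G(P_2)$ using the two directions of Lemma~\ref{lem:iso-iff-ti}. Your caveat concerns only the paper's argument for that lemma, not its statement. The statement does hold for arbitrary $P$: translate so that $\varphi(0)=0$, so $\varphi(p)=\pm p$, and opposite signs at two points outside $\{0,n/2\}$ would force $2p\equiv 0$ or $2q\equiv 0 \pmod n$. So your formal reduction loses nothing.
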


\begin{proof}
The edge-weight multiset of $G(P)$ is exactly $\mu(P)$, so
$\mu(P_1) = \mu(P_2)$ is equivalent to $G(P_1)$ and $G(P_2)$ sharing
the same edge-weight multiset. By Lemma~\ref{lem:iso-iff-ti},
$G(P_1) \cong G(P_2) \Leftrightarrow P_1 \sim_{T/I} P_2$. The result follows
directly from the definition of Z-relation.
\end{proof}

Combining with (Theorem~\ref{thm:cycle-determines-graph}) it follows that Z-related sets must have non-isomorphic cycle graphs.

\begin{theorem}
\label{thm:clique-level}
Let $P_1, P_2 \subseteq \mathbb{Z}_n$ be Z-related. Then their
weighted clique complexes $\Delta(G(P_1))$ and $\Delta(G(P_2))$ are
non-isomorphic, and there exists a finite $k \geq 1$ such that the
weighted $k$-skeleta are non-isomorphic while the weighted
$(k-1)$-skeleta share the same simplex-weight multiset.
\end{theorem}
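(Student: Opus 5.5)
We need to fix what the weighted clique complex is, since the statement relies on it. Take $\Delta(G(P))$ to be the full simplex on $P$ (every subset of a complete graph is a clique). A $j$-simplex $\sigma \subseteq P$ with $|\sigma| = j+1$ carries as its weight the multiset of interval classes on its edges, equivalently the isomorphism class of the weighted complete graph $G(\sigma)$. A weighted isomorphism of complexes is a vertex bijection that carries simplices to simplices and preserves simplex weights. The weighted $j$-skeleton $\Delta^{(j)}$ consists of the simplices of dimension at most $j$ together with their weights.

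\textbf{Step 1: non-isomorphism of the complexes.} Suppose $\psi : \Delta(G(P_1)) \to \Delta(G(P_2))$ is a weighted isomorphism. Restricted to $1$-simplices, it is a bijection $P_1 \to P_2$ that preserves every edge weight. It is therefore a weighted graph isomorphism $G(P_1) \cong G(P_2)$. By Lemma~\ref{lem:iso-iff-ti} this gives $P_1 \sim_{T/I} P_2$, contradicting the Z-relation via Proposition~\ref{prop:z-graph}. The same argument shows that the weighted $j$-skeleta are non-isomorphic for every $j \ge 1$.

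\textbf{Step 2: choice of $k$.} For each $j$, let $M_j(P)$ denote the multiset of weights of the $j$-simplices. Then $M_0(P_1)=M_0(P_2)$ trivially, since $|P_1|=|P_2|$ and vertices carry no weight. Also $M_1(P_1)=M_1(P_2)$, because $M_1(P) = \mu(P)$ and the sets are Z-related. Define $k$ as the least $j \ge 1$ such that $M_j(P_1) \neq M_j(P_2)$, provided such a $j$ exists. Then the $(k-1)$-skeleta have equal simplex-weight multisets, and the $k$-skeleta are non-isomorphic by Step 1. The bound $k \ge 2$ actually holds, so $k \ge 1$ is satisfied. Finiteness is automatic, since $j \le |P|-1$.

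\textbf{Main obstacle: existence of such a $j$.} This requires showing that some level separates the two sets. The top simplex is $P$ itself, and its weight is the isomorphism class of $G(P)$, which differs between $P_1$ and $P_2$ by Step 1. So if the weight of a simplex is taken as the full weighted-graph isomorphism type, level $|P|-1$ always separates, and $k$ exists.

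If instead the weight is only the multiset of edge weights, the top simplex's weight is $\mu(P)$, which agrees for the two sets, and the argument fails. In that case I would fall back on the structural weighting above, and justify it as the natural notion of weight for a weighted simplex. Alternatively, I would read "non-isomorphic" at level $k$ as non-isomorphism of weighted skeleta, which Step 1 guarantees at every level $\ge 1$. Under that reading, $k = 1$ satisfies the statement outright: the $1$-skeleta are non-isomorphic, and the $0$-skeleta share their trivial weight multiset.

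I expect the delicate point to be this definitional choice rather than any computation.
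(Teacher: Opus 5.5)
Your proof is correct and follows the paper's skeleton: non-isomorphism of the complexes via Lemma~\ref{lem:iso-iff-ti}, then the least level at which simplex-weight multisets differ, capped by the top simplex. It is sounder than the paper's argument at both weak points.

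\emph{Non-isomorphism.} The paper shows that a graph isomorphism would extend to the complexes, then concludes from the absence of one that the complexes are non-isomorphic. That is the wrong direction. What is needed is that a complex isomorphism restricts to a graph isomorphism, which is exactly your Step~1.

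\emph{Existence of $k$.} The paper weights simplices by edge-weight multisets (Definition~\ref{def:clique-complex}; its $\mathcal{C}_k$ collects these). It then infers a minimal differing level from non-isomorphism of the complexes, and caps it by saying the top simplex's ``weight structure determines $G(P)$ up to isomorphism.'' As you observe, under that weighting the top simplex carries $\mu(P)$, which is the same for both sets. So the paper is tacitly switching to your isomorphism-type weighting.

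Your route (a) makes that switch explicit and yields a genuine first level of disagreement. By Lemma~\ref{lem:iso-iff-ti}, $M_j(P)$ is the multiset of $D_n$-classes of the $(j+1)$-subsets of $P$, and the top entry is the class of $P$ itself, which differs. Moreover, an isomorphism type determines its edge multiset, and Step~1 works for any weighting that records edge weights. So the resulting $k$ also witnesses the statement under the paper's own weighting.

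Route (b) proves the literal wording but shows that wording is nearly vacuous. With the clique-size indexing of Definition~\ref{def:clique-complex}, the trivial choice is $k=2$ rather than $k=1$. Under the edge-multiset weighting, neither your argument nor the paper's shows that $\mathcal{C}_k(P_1)\neq\mathcal{C}_k(P_2)$ for some $k$.

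One correction: your opening phrase ``equivalently the isomorphism class of $G(\sigma)$'' is false. An edge multiset does not determine an isomorphism type, and Z-related pairs are exactly the witnesses to this at $\sigma=P$. The phrase also contradicts your own later paragraph. Fix the isomorphism-type weighting from the outset instead.
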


\begin{proof}
Since $P_1$ and $P_2$ are Z-related, $G(P_1) \ncong G(P_2)$ by
Proposition~\ref{prop:z-graph}. A weighted graph isomorphism
$\varphi : G(P_1) \to G(P_2)$ would extend uniquely to an isomorphism
of weighted clique complexes via $\sigma \mapsto \varphi(\sigma)$,
since every simplex is a clique and edge weights determine all simplex
weights. Since no such $\varphi$ exists, $\Delta(G(P_1)) \ncong
\Delta(G(P_2))$.

It remains to show that the disagreement is detectable at some finite
skeletal level $k$ while level $k-1$ agrees on simplex-weight
multisets. Define the \emph{$k$-correlation multiset} $\mathcal{C}_k(P)$
as the multiset of edge-weight multisets over all $k$-cliques of
$G(P)$. Z-relation gives $\mathcal{C}_2(P_1) = \mathcal{C}_2(P_2)$
by definition. Since $\Delta(G(P_1)) \ncong \Delta(G(P_2))$, the
full simplex weight structures differ, so there exists a minimal
$k \geq 2$ such that $\mathcal{C}_k(P_1) \neq \mathcal{C}_k(P_2)$.
Such $k$ is bounded above by $|P|$, since the top-dimensional simplex
is the unique $|P|$-clique whose weight structure determines $G(P)$
up to isomorphism. The level $k$ at which disagreement first occurs
corresponds precisely to the order of higher homometry in the sense
of Amiot~\cite{amiot2016}: $P_1$ and $P_2$ are $j$-homometric for
all $j < k$ and fail $k$-homometry. That the bound $k \leq |P|$ can
be sharp is illustrated by the pair $\{0,1,2,3,5,6,7,9,13\}$ and
$\{0,1,4,5,6,7,8,10,12\}$ in $\mathbb{Z}_{18}$, which are not
T/I-equivalent and yet share the same $3$-deck, so disagreement does
not occur until $k \geq 4$.
\end{proof}

\begin{remark}
The bound $k \leq |P|$ is trivial. Whether a uniform finite bound
independent of $|P|$ and $n$ exists for all Z-related pairs remains
open.
\end{remark}

\section{Homometry across equal temperaments}
Application of cent-weighting to pitch class graphs allows for their comparison across different equal temperaments and gives rise to cross-temperament homometry:

\begin{definition}[Cross-temperament homometry]
Let $P_1 \subseteq \mathbb{Z}_n$ and $P_2 \subseteq \mathbb{Z}_m$ be pitch class sets in different equal temperaments. They are \textbf{cross-temperament homometric} if their cent-interval multisets $\mu'(P_1)=\{ \{ w'\{ p_i,p_j\} : p_i,p_j \in P_1, i \neq j \}\}$ and $\mu'(P_2)=\{ \{ w'\{ q_i,q_j\} : q_i,q_j \in P_2, i \neq j \}\}$ coincide.
\end{definition}

Consider $P_1 = \{0,1,3\}
\subseteq \mathbb{Z}_6$ and $P_2 = \{0,2,5\} \subseteq
\mathbb{Z}_{10}$. The dieses are $\delta(6) = 200$ cents and
$\delta(10) = 120$ cents. The cent-weighted edge multisets are:
\[
    \mu'(P_1) = \{200, 400, 400\}, \qquad
    \mu'(P_2) = \{240, 360, 480\}.
\]
These do not coincide, so $P_1$ and $P_2$ are not cross-temperament
homometric. The cent-weighted edge weights of $P_1 \subseteq
\mathbb{Z}_m$ lie in $\delta(m)\cdot\mathbb{Z}_{>0}$ and those of
$P_2 \subseteq \mathbb{Z}_n$ lie in $\delta(n)\cdot\mathbb{Z}_{>0}$.
A shared edge weight requires a common positive rational multiple of
$\delta(m) = 1200/m$ and $\delta(n) = 1200/n$, which exists if and
only if $\delta(m)/\delta(n) = n/m$ is rational --- which it always
is --- but a shared \emph{integer} multiple requires $n/m \in
\mathbb{Q}$ with a common realisation, possible only when
$\gcd(m,n) > 1$.

\begin{proposition}
\label{prop:harmonic-incommensurability}
Let $P_1 \subseteq \mathbb{Z}_m$ and $P_2 \subseteq \mathbb{Z}_n$.
Cross-temperament homometry between $P_1$ and $P_2$ requires
$\gcd(m,n) > 1$. When $\gcd(m,n) = 1$, no edge weight of
$G'(P_1)$ can coincide with any edge weight of $G'(P_2)$, so
$\mu'(P_1) \neq \mu'(P_2)$ for all choices of $P_1$ and $P_2$.
We call this \textbf{harmonic incommensurability} of coprime equal
temperaments.
\end{proposition}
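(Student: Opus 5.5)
The plan is to reduce the statement to an elementary divisibility fact about the cent-weights $w'$, and then deduce the multiset conclusion from it. Suppose some edge of $G'(P_1)$ and some edge of $G'(P_2)$ carry the same cent-weight. By the definition of cent-weighting this means
\[
    \frac{1200}{m}\,a \;=\; \frac{1200}{n}\,b,
    \qquad a = \ic(p,q) \in \{1,\ldots,\lfloor m/2\rfloor\},\quad
    b = \ic(p',q') \in \{1,\ldots,\lfloor n/2\rfloor\},
\]
which is equivalent to the integer equation $an = bm$. I will carry out the following steps in order.

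\begin{enumerate}
\item Assume $\gcd(m,n)=1$. From $m \mid an$ and Euclid's lemma, conclude $m \mid a$.
\item Note that the range of $\ic$ in Definition~\ref{def:musical-graph} gives $1 \le a \le \lfloor m/2\rfloor < m$. No multiple of $m$ lies in this range, which is a contradiction.
\item Conclude that the cent-weight sets $\delta(m)\cdot\{1,\ldots,\lfloor m/2\rfloor\}$ and $\delta(n)\cdot\{1,\ldots,\lfloor n/2\rfloor\}$ are disjoint. This is the pointwise claim that no edge weight of $G'(P_1)$ equals one of $G'(P_2)$. It replaces the informal discussion preceding the proposition with a precise argument.
\end{enumerate}

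To pass from disjoint weights to $\mu'(P_1)\neq\mu'(P_2)$, I observe that two multisets supported on disjoint sets can coincide only if both are empty. The multiset $\mu'(P_i)$ is empty exactly when $|P_i|\le 1$. So the step needing care is the degenerate case: I will state the hypothesis $|P_1|,|P_2|\ge 2$ explicitly, since two singletons in coprime temperaments are trivially cross-temperament homometric via empty multisets. With that hypothesis each $\mu'(P_i)$ is a nonempty multiset, the two supports are disjoint, and hence $\mu'(P_1)\neq\mu'(P_2)$.

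The first sentence of the proposition, that cross-temperament homometry requires $\gcd(m,n)>1$, is the contrapositive of this. I expect no real obstacle beyond making the nondegeneracy hypothesis explicit. I would also add a remark that the condition $\gcd(m,n)>1$ is only necessary, not sufficient: the example $P_1=\{0,1,3\}\subseteq\mathbb{Z}_6$ and $P_2=\{0,2,5\}\subseteq\mathbb{Z}_{10}$ given earlier has $\gcd(6,10)=2$ and still fails to be cross-temperament homometric.
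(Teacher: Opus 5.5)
Your argument is correct and is essentially the paper's: the paper also rewrites a shared cent weight as $a/m = b/n$, obtains $m \mid an$, and uses $\gcd(m,n)=1$ to force $m \mid a$, contradicting $1 \le a \le \lfloor m/2 \rfloor < m$ (this is carried out in the proof of Proposition~\ref{prop:harmonic-incommensurability2}). Your explicit nondegeneracy hypothesis is a genuine improvement that the paper omits: if both sets have at most one element, their cent-interval multisets are both empty and the statement as written fails (it in fact suffices that one of $|P_1|, |P_2|$ be at least $2$).
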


A shared cent weight requires
\[
    \frac{1200}{m} \cdot a = \frac{1200}{n} \cdot b
    \quad \Longleftrightarrow \quad
    \frac{a}{m} = \frac{b}{n}
\]
for positive integers $a \leq \lfloor m/2 \rfloor$ and $b \leq
\lfloor n/2 \rfloor$. This is the condition that the two interval
classes $a \in \mathbb{Z}_m$ and $b \in \mathbb{Z}_n$ represent the
same fraction of their respective octaves. Cross-temperament
homometry between $P_1 \subseteq \mathbb{Z}_m$ and $P_2 \subseteq
\mathbb{Z}_n$ therefore requires that every cent weight in
$\mu'(P_1)$ is matched by an interval class in $P_2$ occupying the
same rational position $a/m = b/n$ in the octave, and vice versa.

\begin{proposition}
\label{prop:harmonic-incommensurability2}
Let $P_1 \subseteq \mathbb{Z}_m$ and $P_2 \subseteq \mathbb{Z}_n$.
A cent weight $\frac{1200a}{m}$ arising in $\mu'(P_1)$ can appear
in $\mu'(P_2)$ if and only if $\frac{a}{m} = \frac{b}{n}$ for some
positive integer $b \leq \lfloor n/2 \rfloor$, equivalently $n \mid
mb$. When $\gcd(m,n) = 1$ this forces $m \mid b$, which is
impossible since $b \leq \lfloor m/2 \rfloor < m$. Hence coprime
equal temperaments are \textbf{harmonically incommensurable}: no
interval class in $\mathbb{Z}_m$ occupies the same rational position
in the octave as any interval class in $\mathbb{Z}_n$, and
cross-temperament homometry between any $P_1 \subseteq \mathbb{Z}_m$
and $P_2 \subseteq \mathbb{Z}_n$ is impossible.
\end{proposition}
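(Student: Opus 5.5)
The plan is to reduce everything to elementary divisibility. First I establish the stated equivalence. Fix $m$ and $n$, and take a cent weight of $G'(P_1)$. By the definition of cent-weighting it equals $\frac{1200}{m}a$, where $a=\ic(p,q)\in\{1,\ldots,\lfloor m/2\rfloor\}$. Any weight of $G'(P_2)$ likewise equals $\frac{1200}{n}b$ with $b\in\{1,\ldots,\lfloor n/2\rfloor\}$. Dividing by $1200$ shows the two weights are equal exactly when $a/m=b/n$, i.e.\ when $an=bm$. Since $a$ is an integer, solvability of this equation for $a$ given $b$ is the same as $n\mid mb$. This gives the ``if and only if'' directly and requires nothing beyond the displayed computation preceding Proposition~\ref{prop:harmonic-incommensurability2}.

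Second, I treat the coprime case. Assume $\gcd(m,n)=1$ and suppose $an=bm$ with $a,b$ as above. Then $m\mid an$, and coprimality gives $m\mid a$ by Euclid's lemma. But $1\le a\le\lfloor m/2\rfloor<m$, which is a contradiction. Symmetrically, $n\mid bm$ forces $n\mid b$, which is impossible since $1\le b\le\lfloor n/2\rfloor<n$.

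I expect the main obstacle to be bookkeeping in the statement rather than the argument itself. The written version says the condition ``forces $m\mid b$'' and bounds $b$ by $\lfloor m/2\rfloor$; this mixes the roles of the two temperaments. I would present the correct pairing ($m\mid a$ with $a<m$, or equivalently $n\mid b$ with $b<n$) and note that this is what the statement intends. Using the strict bound $a\le\lfloor m/2\rfloor<m$, which holds because $m\ge 2$ whenever $P_1$ has an edge, I conclude that no interval class of $\mathbb{Z}_m$ occupies the same rational position in the octave as any interval class of $\mathbb{Z}_n$.

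Finally, I deduce the homometry claim. If $\mu'(P_1)=\mu'(P_2)$ and the multisets are nonempty, then some weight of $\mu'(P_1)$ lies in $\mu'(P_2)$, which contradicts the previous step. The degenerate case must be excluded explicitly. If $|P_1|,|P_2|\le 1$, both multisets are empty and coincide vacuously, so I would add the standing hypothesis $|P_1|,|P_2|\ge 2$. This hypothesis is implicit in the paper, whose pitch class graphs are meant to have edges. I would also remark that $\gcd(m,n)>1$ is only necessary and not sufficient, since Proposition~\ref{prop:harmonic-incommensurability} asks only for necessity.
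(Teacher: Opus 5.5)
Your argument is correct and is essentially the paper's proof: the paper also reduces equality of cent weights to $an = bm$, takes $m \mid an$, and uses $\gcd(m,n)=1$ to get $m \mid a$, contradicting $1 \le a \le \lfloor m/2 \rfloor < m$. This is exactly the pairing you propose in place of the statement's inconsistent ``$m \mid b$, $b \le \lfloor m/2 \rfloor$''. Your explicit exclusion of the degenerate case $|P_1|,|P_2| \le 1$, where both cent multisets are empty and coincide vacuously, is a legitimate tightening that the paper's proof omits.
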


\begin{proof}
The equation $a/m = b/n$ has a positive integer solution $b \leq
\lfloor n/2 \rfloor$ if and only if $b = an/m$ is a positive integer
not exceeding $n/2$. Integrality requires $m \mid an$. When
$\gcd(m,n) = 1$ this reduces to $m \mid a$, contradicting $a \leq
\lfloor m/2 \rfloor < m$. When $\gcd(m,n) = d > 1$, writing $m =
dm'$ and $n = dn'$ with $\gcd(m',n') = 1$, the condition becomes
$m' \mid a$, which is satisfiable for $a = m' \leq \lfloor m/2
\rfloor$ whenever $m' < m/2$, i.e.\ $d > 2$.
\end{proof}

\section{Higher-order homometry and clique complexes}
\label{sec:higher-homometry}

Amiot, Mandereau, Ghisi, Andreatta, and Agon~\cite{mandereau2011}
introduce \textbf{higher-order homometry} as a refinement of the
Z-relation. Two pitch class sets $P_1, P_2 \subseteq \mathbb{Z}_n$
are \textbf{$k$-homometric} if their $k$-point correlation functions
coincide.

The $k$-point correlation function has a clear combinatorial
reformulation in terms of the complete weighted pitch class graph.
The number of ordered $k$-tuples of elements of $P$ realising a
given multiset of interval classes is exactly the number of
labeled $k$-cliques in $G(P)$ with a given edge-weight multiset.
This reformulation places higher-order homometry squarely within
the framework of \textbf{clique complexes}.

\begin{definition}[Weighted clique complex]
\label{def:clique-complex}
The \textbf{weighted clique complex} $\Delta(G(P))$ of the pitch
class graph $G(P)$ is the simplicial complex whose simplices are
the cliques of $G(P)$, with each simplex $\sigma \subseteq P$
carrying the edge-weight multiset
$\{w(\{p,q\}) : p, q \in \sigma,\, p \neq q\}$.
The \textbf{$k$-skeleton} $\Delta_k(G(P))$ consists of all
simplices of dimension at most $k-1$, corresponding to all
cliques of size at most $k$.
\end{definition}

Since $G(P)$ is a complete graph, every subset of $P$ is a clique,
so $\Delta(G(P))$ is the full simplex on $|P|$ vertices equipped
with weighted faces. The $1$-skeleton is $G(P)$ itself. The
$2$-skeleton records the weighted triangles, encoding the
3-point correlation function. In general the $k$-skeleton encodes
the $k$-point correlation function of Amiot et al., and two sets
are $k$-homometric if and only if their weighted $(k-1)$-skeleta
have the same simplex-weight multisets at every dimension up to
$k-1$.

\section{Anagramic pitch class sets}

\begin{definition}[Anagramic pitch class sets]
Two pitch class sets $P_1, P_2 \subseteq \mathbb{Z}_n$ are
\textbf{anagramic} if their step sequences $\mathbf{w}^{(1)}$
and $\mathbf{w}^{(2)}$ are permutations of each other but are
not related by cyclic rotation or reflection. Anagramic sets
are necessarily Z-related. They are \textbf{step-homometric}
if additionally $|\hat{w}^{(1)}(f)|^2 = |\hat{w}^{(2)}(f)|^2$
for all $f$ --- equivalently, if the step sequences are
themselves Z-related as 1D sequences.
\end{definition}

\begin{theorem}
The characteristic polynomial of the weighted cycle adjacency matrix $\chi_{A_C}$ is a complete
T/I invariant for all pitch class sets that are not
step-homometric. The step-homometric anagramic pairs are
precisely those where the Z-relation recurses: $P_1$ and
$P_2$ are Z-related in $\mathbb{Z}_n$ and their step
sequences are Z-related in $\mathbb{Z}_k$.
\end{theorem}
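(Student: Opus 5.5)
The plan is to prove the two directions separately: first that $\chi_{A_C}$ is a T/I invariant, and then that, outside the step-homometric case, equal polynomials force T/I equivalence.

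\emph{Invariance.} Write $A_C$ for the $k\times k$ symmetric matrix with $(A_C)_{i,i+1}=(A_C)_{i+1,i}=w_i$ (indices mod $k$) and zeros elsewhere. Suppose $P_1\sim_{T/I}P_2$. By Lemma~\ref{lem:iso-iff-ti} there is a weighted isomorphism $G(P_1)\cong G(P_2)$ induced by some $g\in D_n$. Such a $g$ maps the cyclic order of $P_1$ to the cyclic order of $P_2$, possibly reversed. So the step sequence is changed by a cyclic rotation (transposition) or a reversal (inversion). Either change is a simultaneous row and column permutation $A_C\mapsto \Pi A_C\Pi^{\top}$, so $\chi_{A_C}$ is unchanged.

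\emph{Expanding the polynomial.} Next I would write $\chi_{A_C}$ explicitly using the Sachs expansion for weighted graphs. The elementary subgraphs of a $k$-cycle are matchings of the cycle, together with the whole cycle itself. This gives
\[
\chi_{A_C}(x)=\sum_{M}(-1)^{|M|}\Bigl(\prod_{e\in M}w_e^2\Bigr)x^{k-2|M|}\;-\;2\prod_{i=0}^{k-1}w_i\cdot[\text{constant term}],
\]
where $M$ runs over the matchings of $C_k$. Hence $\chi_{A_C}$ records exactly two kinds of data: the elementary ``non-adjacent'' symmetric functions $m_j=\sum_{|M|=j}\prod_{e\in M} w_e^2$ of the squared steps, and the product $\prod w_i$. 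For example, $m_1=\sum w_i^2$, and $m_2=\tfrac12\bigl[(\sum w_i^2)^2-\sum w_i^4\bigr]-\sum_i w_i^2w_{i+1}^2$. This second coefficient already contains the lag-$1$ autocorrelation of the sequence $(w_i^2)$.

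\emph{Completeness and the step-homometric case.} If $\chi_{A_{C_1}}=\chi_{A_{C_2}}$ but the sets are not T/I equivalent, then by the invariance step and Theorem~\ref{thm:cycle-determines-graph} the step sequences are not related by rotation or reflection. I would first try to show that they are anagramic. Since $\sum_i w_i=n$ is fixed and $\prod_i w_i$ is read off the constant term, the plan is to use the matching coefficients to recover the power sums of the $w_i$, and hence the multiset of steps. Then, peeling off $m_2,m_3,\dots$ inductively, I would try to show that the remaining information in $\chi_{A_C}$ is exactly the set of cyclic autocorrelations $\sum_i w_i w_{i+\ell}$. By Parseval/Wiener--Khinchin these are equivalent to $|\hat w(f)|^2$ for all $f$. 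Under that identification, equal $\chi$ with non-equivalent step sequences would force the step sequences to be Z-related in $\mathbb{Z}_k$, i.e.\ step-homometric. Conversely, $P_1,P_2$ are Z-related in $\mathbb{Z}_n$ by the remark in the definition (anagramic sets are necessarily Z-related), and Proposition~\ref{prop:z-graph} then gives the recursive description claimed in the second sentence.

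\emph{Expected obstacle.} The hard step is the last one: matching sums of squared weights are not obviously the same data as cyclic autocorrelations of the steps. Two things are unclear: whether the multiset of steps can really be recovered from the $m_j$ and $\prod w_i$, and whether the higher $m_j$ contain information beyond the autocorrelation. I would first test small $k$ ($k=4,5,6$, with $n=12$) by computing $\chi$ for all anagramic pairs, to check whether the claimed equivalence even holds. If it fails, I would weaken the statement to the implication ``$\chi$ equal $\Rightarrow$ same $m_1$ and same lag-$1$ term'', and ask whether the statement should be stated for the circulant matrix built from the steps instead. For a circulant matrix the eigenvalues are $\hat w(f)$, so $\chi$ would be governed directly by Fourier data, and the step-homometric condition would arise naturally.
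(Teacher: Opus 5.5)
The paper states this theorem without proof, so there is no argument to compare yours against. Your invariance half is correct. The completeness half cannot be carried out, because the first sentence of the statement is false.

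For $k=4$ your Sachs expansion reads $\chi_{A_C}(x)=x^4-\bigl(\sum_i w_i^2\bigr)x^2+(w_0w_2-w_1w_3)^2$. Take $P_1=\{0,3,6,16\}$ and $P_2=\{0,6,8,15\}$ in $\mathbb{Z}_{30}$, with step sequences $(3,3,10,14)$ and $(6,2,7,15)$. All steps are at most $15$, so they coincide with the interval-class weights of $C(P)$, and it does not matter which reading of $A_C$ one takes. Both sets give $\sum_i w_i^2=314$ and $(w_0w_2-w_1w_3)^2=144$ (from $30-42$ and $42-30$), hence the same polynomial $x^4-314x^2+144$. Yet the step multisets $\{3,3,10,14\}$ and $\{2,6,7,15\}$ differ. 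So the sets are neither T/I-equivalent nor anagramic, and in particular not step-homometric.

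This directly refutes your first sub-goal of recovering the step multiset from $\chi$. Your claim that $\prod_i w_i$ can be read off the constant term also holds only for odd $k$. For even $k$ the constant term is $(A-B)^2$ if $4\mid k$ and $-(A+B)^2$ if $k\equiv 2 \pmod 4$, where $A=\prod_{i\ \mathrm{even}}w_i$ and $B=\prod_{i\ \mathrm{odd}}w_i$. More generally, at most $\lceil k/2\rceil$ non-leading coefficients of $\chi$ are nonzero, against the $k-1$ free parameters of a composition of $n$, so there is no reason to expect injectivity.

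Your second sub-goal, identifying the remaining data with the autocorrelations $\sum_i w_iw_{i+\ell}$, also fails. The Sachs coefficients are matching sums of \emph{squared} weights, so they see correlations of $(w_i^2)$, not of $(w_i)$. For instance, $P=\{0,1,3,6,11\}$ and $P'=\{0,1,3,8,12\}$ in $\mathbb{Z}_{15}$ have steps $(1,2,3,5,4)$ and $(1,2,5,4,3)$. These have the same multiset and are not dihedrally related: the step $1$ has neighbours $2,4$ in one and $2,3$ in the other. They have equal cyclic autocorrelations $R(1)=47$ and $R(2)=38$, so they are step-homometric. But $m_2=\sum_i w_i^2w_{i+2}^2$ is $342$ versus $366$, giving $\chi=x^5-55x^3+342x-240$ versus $x^5-55x^3+366x-240$.

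The same pair is not Z-related in $\mathbb{Z}_{15}$: interval class $5$ occurs three times in $P$ and once in $P'$. So the remark you rely on for the second sentence, that anagramic sets are necessarily Z-related, is false. A smaller instance: $\{0,1,3,6\}$ and $\{0,1,4,6\}$ in $\mathbb{Z}_{12}$, with steps $(1,2,3,6)$ and $(1,3,2,6)$, have interval vectors $[112011]$ and $[111111]$. The theorem's characterization of step-homometric pairs therefore fails too.

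Your circulant fallback would also need repair. Rotating $w$ multiplies each $\hat w(f)$ by a root of unity, so the spectrum of $\mathrm{circ}(w)$ is not T/I-invariant. The circulant of the autocorrelation, with spectrum $\{|\hat w(f)|^2\}$, would be invariant, but it only records those values as a multiset.

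In short, only the invariance direction is provable; the completeness claim has to be weakened, not proved.
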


\section{Polynomial invariant of pitch class sets}
\begin{definition}[Antipodal pair multiset]
Let $C(P \subseteq \mathbb{Z}_n)$ be a cycle pitch class graph with edge weights $W = \{w_1,w_2,\dots,w_k \}$. The \textbf{antipodal pair multiset} $$L = \{ \{ w_i,w_{i+\lfloor k/2 \rfloor (\mathrm{mod} \ k)} \}: w_i \in W \}$$

is a T/I invariant.

\begin{proof}
We show that $L(P)$ is preserved under both generators of
the dihedral group $D_n$.

\medskip
\noindent\textbf{Transposition.}
Let $T_s(P)$ have step sequence $(w_1', \ldots, w_k')$
where $w_i' = w_{i+r \bmod k}$ for some shift $r$ determined
by $s$. Then
\[
    L(T_s(P)) = \bigl\{(w_{i+r},\, w_{i+r+h}) : i = 0,\ldots,k-1\bigr\}
\]
where $h = \lfloor k/2 \rfloor$ and all indices are taken mod
$k$. Since $i \mapsto i+r$ is a bijection on $\mathbb{Z}_k$,
this is the same multiset as
$\{(w_j, w_{j+h}) : j = 0,\ldots,k-1\} = L(P)$.

\medskip
\noindent\textbf{Inversion.}
Inversion reverses the step sequence: $w_i' = w_{k-i \bmod k}$.
Then
\[
    L(I_s(P)) = \bigl\{(w_{k-i},\, w_{k-i+h}) : i = 0,\ldots,k-1\bigr\}.
\]
The map $i \mapsto k-i \bmod k$ is a bijection on
$\mathbb{Z}_k$, so substituting $j = k-i$ gives
\[
    L(I_s(P)) = \bigl\{(w_j,\, w_{j-h}) : j = 0,\ldots,k-1\bigr\}.
\]
For even $k$, $h = k/2$ and $j - h \equiv j + h \pmod{k}$,
so $L(I_s(P)) = L(P)$ immediately.

For odd $k$, $h = (k-1)/2$ and $j - h \equiv j + h + 1
\pmod{k}$, so the map sends $(w_j, w_{j-h})$ to
$(w_j, w_{j+h+1})$ rather than $(w_j, w_{j+h})$. However,
since $k - h = h + 1$ for odd $k$, the multiset
$\{(w_j, w_{j+k-h}) : j = 0,\ldots,k-1\}$ equals
$\{(w_{j+k-h}, w_j) : j = 0,\ldots,k-1\}$
$= \{(w_j, w_{j+h}) : j = 0,\ldots,k-1\} = L(P)$
after applying the bijection $j \mapsto j + k - h$ and
noting that the pair $(a,b)$ and $(b,a)$ represent the same
element of $L$ since $L$ is defined as an unordered pair
$\{w_i, w_{i+h}\}$ by the symmetry of the construction.

\medskip
In both cases $L(T_s(P)) = L(I_s(P)) = L(P)$, and since
$D_n$ is generated by transpositions and inversions, $L(P)$
is preserved under all T/I operations.
\end{proof}
\end{definition}

\begin{definition}[Polynomial invariant]

Let $L(P)$ be a pair multiset of pitch class set $P$. Its polynomial form can be expressed as

$$
\Lambda(L)=\sum_{\{a, b \} \in L(P)} x^{\min \{a, b \}} y^{\max \{a, b \}} \in \mathbb{Z}[x,y]
$$

We demonstrate that $\Lambda(L)$ separates an anagramic Z-related pair in the following example.

\end{definition}

\begin{remark}[Example: $\Lambda$ separates Forte 6-Z17 and 6-Z43]
\label{rem:lambda-example}
Consider the anagramic Z-related pair of hexachords in $\mathbb{Z}_{12}$
\[
    P_1 = \{0,1,2,4,7,8\} \quad \text{(Forte 6-Z17)}, \qquad
    P_2 = \{0,1,2,5,7,8\} \quad \text{(Forte 6-Z43)},
\]
whose step sequences are, respectively,
\[
    \mathbf{w}^{(1)} = (1,1,2,3,1,4), \qquad
    \mathbf{w}^{(2)} = (1,1,3,2,1,4).
\]
Both have step multiset $\{1,1,1,2,3,4\}$, so the sequences are permutations of
each other. One checks that $\mathbf{w}^{(2)}$ is neither a cyclic rotation nor
a reflection of $\mathbf{w}^{(1)}$, confirming that $P_1$ and $P_2$ are
anagramic. Both have interval vector $[3,2,2,2,4,1]$,
verifying the Z-relation.

\medskip
\noindent\textbf{Computing $L(P_1)$.}
With $k = 6$ and $h = \lfloor 6/2 \rfloor = 3$, the antipodal pair at index $i$
is $\{w^{(1)}_i, w^{(1)}_{i+3}\}$:

\medskip
\begin{center}
\begin{tabular}{cccc}
\toprule
$i$ & $w^{(1)}_i$ & $w^{(1)}_{i+3\bmod 6}$ & antipodal pair \\
\midrule
0 & 1 & 3 & $\{1,3\}$ \\
1 & 1 & 1 & $\{1,1\}$ \\
2 & 2 & 4 & $\{2,4\}$ \\
3 & 3 & 1 & $\{1,3\}$ \\
4 & 1 & 1 & $\{1,1\}$ \\
5 & 4 & 2 & $\{2,4\}$ \\
\bottomrule
\end{tabular}
\end{center}

\medskip
Thus $L(P_1) = \bigl\{\{1,3\}^2,\, \{1,1\}^2,\, \{2,4\}^2\bigr\}$, giving
\[
    \Lambda\bigl(L(P_1)\bigr) = 2x^1y^1 + 2x^1y^3 + 2x^2y^4.
\]

\medskip
\noindent\textbf{Computing $L(P_2)$.}
The same procedure applied to $\mathbf{w}^{(2)} = (1,1,3,2,1,4)$:

\medskip
\begin{center}
\begin{tabular}{cccc}
\toprule
$i$ & $w^{(2)}_i$ & $w^{(2)}_{i+3\bmod 6}$ & antipodal pair \\
\midrule
0 & 1 & 2 & $\{1,2\}$ \\
1 & 1 & 1 & $\{1,1\}$ \\
2 & 3 & 4 & $\{3,4\}$ \\
3 & 2 & 1 & $\{1,2\}$ \\
4 & 1 & 1 & $\{1,1\}$ \\
5 & 4 & 3 & $\{3,4\}$ \\
\bottomrule
\end{tabular}
\end{center}

\medskip
Thus $L(P_2) = \bigl\{\{1,2\}^2,\, \{1,1\}^2,\, \{3,4\}^2\bigr\}$, giving
\[
    \Lambda\bigl(L(P_2)\bigr) = 2x^1y^1 + 2x^1y^2 + 2x^3y^4.
\]

\medskip
Since $\Lambda(L(P_1)) \neq \Lambda(L(P_2))$ (the monomials $x^1y^3$ and $x^2y^4$
in $\Lambda(L(P_1))$ are replaced by $x^1y^2$ and $x^3y^4$ in
$\Lambda(L(P_2))$), the invariant $\Lambda$ distinguishes this anagramic pair.
Informally: $P_1$ pairs its minor-third step (3) antipodally against a semitone (1),
and its major-third step (2) against a major-third-plus-one step (4);
$P_2$ instead pairs its major-second step (2) against a semitone (1),
and its minor-third step (3) against that same step of~4.
\end{remark}

\end{document}